\numberwithin{equation}{section}
\newtheorem{lem}{Lemma}
\newtheorem{thm}{Theorem}
\begin{document}

\begin{large}
\centerline{\Large \bf Subsets of $\mathbb{F}^*_p$ with only small products or ratios}

\end{large}
\vskip 10pt
\begin{large}
\centerline{\sc  Patrick Letendre}
\end{large}
\vskip 10pt
\begin{abstract}
Let $p$ be a fixed prime. We estimate the number of elements of a set $A \subseteq \mathbb{F}^*_p$ for which
$$
s_1s_2 \equiv a \pmod{p} \quad \mbox{for some}\quad a \in [-X,X] \quad \mbox{for all}\quad s_1,s_2 \in A.
$$
We also consider variations and generalizations.
\end{abstract}
\vskip 10pt
\noindent AMS Subject Classification numbers: 11A07, 11B75

\noindent Key words: congruences, special sets $\pmod{p}$

\vskip 20pt

\section{Introduction and notation}

Let $p$ be a fixed prime number. For any member $\alpha$ of an equivalence class of $\mathbb{Z}/p\mathbb{Z}$, we write
$$
|\alpha|_p:=\min_{k \in \mathbb{Z}}|\alpha+kp|
$$
and for any finite set $A$ we write $|A|:= \# A$ which should not be confused with the norm of a complex number. Inspired by the paper \cite{jc:mzg}, we are interested by the cardinality of a set $A \subseteq \mathbb{F}^*_p$ that satisfies some property. Precisely, for each $X \ge 1$ we let $\mathcal{S}(X)$ be the set of all subsets $A \subseteq \mathbb{F}^*_p$ that satisfy
\begin{equation}\label{property-1}
\Bigl|\frac{s_1}{s_2}\Bigr|_p \le X\ \mbox{and/or}\ \Bigl|\frac{s_2}{s_1}\Bigr|_p \le X\ \mbox{for each}\ (s_1,s_2) \in A^2.
\end{equation}
We thus define
$$
S(X):=\max_{\substack{A \in \mathbb{F}^*_p \\ A \in \mathcal{S}(X)}}|A|.
$$

Similarly, for each integer $n \ge 2$ and $X \ge 1$ we let $\mathcal{R}_n(X)$ be the set of all subsets $A \subseteq \mathbb{F}^*_p$ that satisfy
\begin{equation}\label{property-2}
|s_1 \cdots s_n|_p \le X\ \mbox{for all pairwise distinct}\ s_1,\dots,s_n \in A.
\end{equation}
Then, we consider the quantity
$$
R_n(X):=\max_{\substack{A \in \mathbb{F}^*_p \\ A \in \mathcal{R}_n(X)}}|A|.
$$

For any $m, n \in \mathbb{N}$, we write
$$
\tau_n(m):=|\{(d_1,\dots,d_n) \in \mathbb{N}^n:\ d_1\cdots d_n=m\}|.
$$
We will often use the well known fact that $\tau_n(m) \ll_{n,\epsilon} m^\epsilon$ for each $n \ge 2$ and $\epsilon > 0$. We also write $e_p(z):=\exp\bigl(\frac{2\pi i z}{p}\bigr)$ for any $z \in \mathbb{C}$.

\section{Statement of theorems}

\begin{thm}\label{thm:1}
For each $1 \le X \le \frac{p}{12}$, we have
$$
S(X) \ll_{\epsilon} \min \left(X^{\epsilon}+\frac{X^{2+\epsilon}}{p}, p^{1/2}\right)
$$
for each fixed $\epsilon > 0$.
\end{thm}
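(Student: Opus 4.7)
\bigskip
\noindent\textbf{Plan.} The proof splits according to the two terms in the minimum. A preliminary reduction transfers the problem to the small-interval case: fix any $s_0\in A$ and decompose $A=A_+\cup A_-$ with $A_+:=\{s\in A:|s/s_0|_p\le X\}$ and $A_-:=\{s\in A:|s_0/s|_p\le X\}$, which is valid by the defining property of $\mathcal{S}(X)$. Both $B_+:=A_+/s_0$ and $B_-:=s_0/A_-$ lie in $J:=\{r\in\mathbb{F}_p^*:|r|_p\le X\}$, which we identify with $\{n\in\mathbb{Z}:1\le|n|\le X\}$ via the representative in $(-p/2,p/2]$. Both inherit the $\mathcal{S}(X)$ property, since it is preserved under multiplicative scaling by a fixed element of $\mathbb{F}_p^*$ and under simultaneous inversion of all elements. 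Hence $|A|\le|B_+|+|B_-|$, so it suffices to bound $|B|$ for any $B\subseteq J$ with $B\in\mathcal{S}(X)$.

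\smallskip
\noindent\emph{First bound} ($|B|\ll_\epsilon X^\epsilon+X^{2+\epsilon}/p$). By the defining property, at least $|B|^2/2$ ordered pairs $(a,b)\in B^2$ satisfy $|a/b|_p\le X$; each such pair uniquely determines an integer $c\in[-X,X]$ with $a\equiv cb\pmod p$, and hence a unique integer $\ell$ with $a-cb=\ell p$ and $|\ell|\le(X^2+X)/p$. Split by $\ell$. If $\ell=0$ then $b\mid a$ in $\mathbb{Z}$, so for fixed $a\in B$ the number of admissible $b$ is at most $\tau(|a|)\ll_\epsilon X^\epsilon$, contributing $\ll|B|X^\epsilon$ pairs total. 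If $\ell\ne 0$ then $b\mid(a-\ell p)$ with $|a-\ell p|\le X+|\ell|p\ll X^2$, giving $\ll X^\epsilon$ admissible $b$ per fixed $(a,\ell)$; summing over the $\ll X^2/p$ nonzero values of $\ell$ contributes $\ll|B|X^{2+\epsilon}/p$. Combining, $|B|^2\ll|B|X^\epsilon+|B|X^{2+\epsilon}/p$, so $|B|\ll X^\epsilon+X^{2+\epsilon}/p$.

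\smallskip
\noindent\emph{Second bound} ($|B|\ll p^{1/2}$). Since $BB^{-1}\subseteq S:=J\cup J^{-1}$ with $|S|\le 4X+1\le p/3$, expanding $\mathbf{1}_S$ in multiplicative characters of $\mathbb{F}_p^*$ gives
\[
|B|^2=\sum_{r\in S}r_{B/B}(r)=\frac{1}{p-1}\sum_\chi\hat{\mathbf{1}}_S(\chi)\,|\hat{\mathbf{1}}_B(\chi)|^2,
\]
where $r_{B/B}(r)=|\{(a,b)\in B^2:a\equiv rb\pmod p\}|$. Isolating the principal character (contributing $|S|\,|B|^2/(p-1)\le|B|^2/3$) and applying Parseval to the remainder yields $|B|(1-|S|/(p-1))\le\max_{\chi\ne\chi_0}|\hat{\mathbf{1}}_S(\chi)|$. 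Using $\chi(r^{-1})=\overline{\chi(r)}$, one has $\hat{\mathbf{1}}_S(\chi)=2\,\mathrm{Re}\sum_{r\in J}\chi(r)+O(1)$, and a P\'olya--Vinogradov-type bound on the symmetric-interval character sum gives $O(p^{1/2})$; combined with $1-|S|/(p-1)\ge 2/3$, this yields $|B|\ll p^{1/2}$. The main obstacle is this character-sum step: the standard P\'olya--Vinogradov inequality produces a logarithmic loss, and recovering the clean $p^{1/2}$ requires either absorbing that factor into the implicit constant or exploiting the symmetry of $S=J\cup J^{-1}$ (which forces the sum over odd characters to vanish) together with a sharper bound in the remaining cases.
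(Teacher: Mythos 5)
Your first bound is correct and is essentially the paper's own argument in a different packaging: the paper pivots on two elements $s_1,s_2\in A$ and counts solutions of $ab=\alpha+Kp$ with $0<|a|,|b|,|\alpha|\le X$ via the divisor function, while you normalize the set into $\{n\in\mathbb{Z}:1\le|n|\le X\}$ and count pairs $(a,b)$ with $a-cb=\ell p$ via divisors of $a-\ell p$. Both are the same divisor-counting mechanism, both give $X^\epsilon+X^{2+\epsilon}/p$, and your version is complete (the only things worth saying explicitly are that $c\neq 0$ and $a-\ell p\neq 0$, which hold since $0<|a|\le X<p$).

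The second bound is where you genuinely diverge from the paper, and the gap you flag is real and not repairable by the fixes you suggest. A small issue first: the error term in $\hat{\mathbf{1}}_S(\chi)=2\,\mathrm{Re}\sum_{r\in J}\chi(r)+O(1)$ is wrong, since $|J\cap J^{-1}|$ counts points on the modular hyperbola $ab\equiv 1\pmod p$ with $|a|,|b|\le X$ and can be of order $X^2/p$, i.e.\ up to about $p/36$; this is fixable by using the pointwise majorant $\mathbf{1}_J+\mathbf{1}_{J^{-1}}\ge\mathbf{1}_S$ instead. The decisive problem is the step $\max_{\chi\ne\chi_0}\bigl|\sum_{1\le|n|\le X}\chi(n)\bigr|\ll p^{1/2}$. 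The sum vanishes for odd $\chi$, but for even $\chi$ it equals $2\sum_{n\le X}\chi(n)$, for which the best unconditional uniform bound is P\'olya--Vinogradov's $O(p^{1/2}\log p)$; by Paley's construction this is essentially sharp for real (hence even) characters, so no clean $O(p^{1/2})$ bound uniform in $X$ and $\chi$ exists. The $\log p$ cannot be ``absorbed into the implicit constant,'' since that constant must not depend on $p$; your route therefore proves $S(X)\ll p^{1/2}\log p$, which is weaker than the stated theorem. The paper avoids multiplicative characters entirely: it bounds the two additive-character sums $T_m=\sum_{s_1,s_2\in A}e_p(ms_1/s_2)$, $m=1,2$, by $p^{1/2}|A|$ each using the standard bilinear bound (Lemma~\ref{lem:1}), and then shows by an elementary positivity/trigonometric argument --- at least half of the ordered pairs satisfy $|s_1/s_2|_p\le p/12$, hence contribute real part at least $\cos(\pi/6)$ to $T_1$ and at least $\cos(\pi/3)$ to $T_2$, and the two frequencies $m=1,2$ cannot both be cancelled by the remaining pairs --- that $|T_1|+|T_2|\ge|A|^2/400$. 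That device is what your write-up is missing; with it the clean $p^{1/2}$ follows immediately.
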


\begin{thm}\label{thm:2}
For each integer $n \ge 2$ and $1 \le X \le 0.24p$, we have
$$
R_n(X) \ll_{\epsilon,n} \min \left(X^{1/n+\epsilon}+\frac{X^{n/(n-1)+\epsilon}}{p^{1/(n-1)}},\min_{k \in \{2,\dots,n-2\}}X^{1/k+\epsilon}+\frac{X^{1+1/k+\epsilon}}{p^{1/k}}, p^{1/n+\epsilon}\right)
$$
for each fixed $\epsilon > 0$.
\end{thm}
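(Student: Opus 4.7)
The plan is to bound $N := |A|$ for $A\in\mathcal{R}_n(X)$ by counting the $\binom{N}{n}$ unordered $n$-subsets of $A$ in several complementary ways. After lifting each element of $A$ to an integer in $\{1,\dots,p-1\}$, every such subset $\{s_1,\dots,s_n\}$ produces a product $s_1\cdots s_n\in[1,(p-1)^n]$ whose residue mod $p$ lies in $[-X,X]$. Setting $M(a):=\#\{S\subseteq A:|S|=n,\ \prod_{s\in S} s\equiv a\pmod p\}$, one has $\sum_{|a|\le X}M(a)=\binom{N}{n}$; each of the three families of bounds in the theorem will come from a different upper bound on the $M(a)$, fed by the divisor estimate $\tau_n(m)\ll_{n,\epsilon}m^\epsilon$.

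For the bound $N\ll X^{1/n+\epsilon}$, the aim is the pointwise estimate $M(a)\ll_{n,\epsilon} X^\epsilon$, which says that distinct $n$-subsets of $A$ give essentially distinct products. This should follow from $\tau_n(a+kp)\ll(a+kp)^\epsilon$ applied to the integer lift $s_1\cdots s_n=a+kp$, together with the fact that the $s_i$ must all lie in $A$, which should restrict the admissible lifts $k$ from $O(p^{n-1})$ down to $O_n(1)$. The companion bound $N\ll p^{1/n+\epsilon}$ is the same argument with the constraint $a\in[-X,X]$ replaced by $a\in\mathbb{F}_p^*$. The large-$X$ bound $N\ll X^{n/(n-1)+\epsilon}/p^{1/(n-1)}$ should come from a complementary counting: without insisting on a pointwise bound on $M(a)$, one sums $\tau_n(a+kp)$ over all $|a|\le X$ and $k\in[0,p^{n-1})$ to get $\ll_{n,\epsilon} Xp^{n-1+\epsilon}$ ordered factorizations in total, which must accommodate the $\approx N^n$ ordered $n$-tuples of distinct elements in $A$; balancing this against the trivial bound $N\le 2X+n$ and solving gives the stated exponent.

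The intermediate bounds for $2\le k\le n-2$ follow by reduction. Fix any $n-k$ pairwise distinct elements $s_{k+1},\dots,s_n\in A$ and set $c\equiv s_{k+1}\cdots s_n\pmod p$; then $A':=A\setminus\{s_{k+1},\dots,s_n\}$ satisfies $|c\,s_1\cdots s_k|_p\le X$ for every pairwise distinct $k$-tuple from $A'$. The $k$-fold products from $A'$ therefore land in a fixed set of $2X+1$ residues, and since the counting arguments above depend only on the size of this target set, they carry over with $n$ replaced by $k$ to give $|A'|\ll_{k,\epsilon} X^{1/k+\epsilon}+X^{1+1/k+\epsilon}/p^{1/k}$, hence the same bound for $N$ up to an additive $O_n(1)$. \textbf{The main obstacle} is the pointwise multiplicity bound $M(a)\ll_{n,\epsilon} X^\epsilon$. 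A naive sum $\sum_k\tau_n(a+kp)$ over all admissible lifts $k\in[0,p^{n-1})$ yields only $\ll p^{n-1+\epsilon}$, which is hopelessly weak; one must exploit the full $n$-fold product condition on $A$ --- that \emph{every} $n$-subset, not merely the one under consideration, has a small product --- to rule out essentially all but $O_n(1)$ of these lifts.
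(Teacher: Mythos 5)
Your proposal correctly identifies the central difficulty --- bounding the number of integer lifts $k$ of a product $s_1\cdots s_n\equiv a\pmod p$ --- but it does not resolve it, and this is precisely where the paper's main idea lives. The paper's device is a normalization by an extremal tuple: let $\alpha\equiv s_1\cdots s_n$ realize the maximum of $|r_1\cdots r_n|_p$ over admissible $n$-tuples; then every $r\in A':=A\setminus\{s_1,\dots,s_n\}$ can be written as $r\equiv a_j s_j/\alpha\pmod p$ with $0<|a_j|\le X$ (take $a_j\equiv r\prod_{i\ne j}s_i$, which is an admissible product). After this substitution an admissible $n$-tuple from $A'$ gives $a_1\cdots a_n=c\alpha^{n-1}+Kp$ with all $|a_i|,|c|\le X$, so the integer on the left is at most $X^n$ in absolute value and the number of lifts $K$ drops from $O(p^{n-1})$ to $O(X^n/p)$ --- exactly the reduction you say ``should'' happen but do not supply. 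When $K=0$ dominates, divisor counting gives $|A'|^n\ll X^{1+\epsilon}$; when $K\neq 0$ dominates, one fixes a popular first coordinate and counts the admissible pairs $(c,K)$ via a gcd argument to get $|A'|^{n-1}\ll X^{n+\epsilon}/p$. Your alternative derivation of the second term by ``balancing'' $N^n\ll Xp^{n-1+\epsilon}$ against $N\le 2X+n$ does not produce the exponent $n/(n-1)$: writing $X=p^\theta$ with $\theta<1$, both of your bounds exceed $p^{(\theta n-1)/(n-1)}$ and indeed exceed $X^{1/n}+X^{n/(n-1)}/p^{1/(n-1)}$, so the minimum of the two is strictly weaker than the theorem in that whole range.

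Two further steps do not work as described. First, the bound $R_n(X)\ll p^{1/n+\epsilon}$ is not obtained by divisor counting at all (for a general target $a\in\mathbb{F}_p^*$ there is no smallness to exploit, and your ``main obstacle'' recurs in worse form); the paper derives it from Bourgain's multilinear exponential sum bound (its Lemma 2): if $|A|>p^{1/n+\delta}$ then, since all admissible products lie in $[-0.24p,0.24p]$, the sum $\sum e_p(s_1\cdots s_n)$ has real part $\gg|A|^n$, contradicting the bound $p^{-\delta'}|A|^n$. Second, your reduction for the intermediate exponents is flawed: fixing $s_{k+1},\dots,s_n$ with product $c$ forces the $k$-fold products of $A'$ into the dilate $c^{-1}\cdot[-X,X]$, which has the right cardinality but is not an interval around $0$, so the lift-counting argument does not ``carry over with $n$ replaced by $k$''; and even if it did, it would yield $X^{1/k}+X^{k/(k-1)}/p^{1/(k-1)}$ rather than the stated $X^{1/k}+X^{1+1/k}/p^{1/k}$. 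The paper instead splits each admissible $n$-tuple into an $(n-k)$-tuple times a $k$-tuple, bounds the total count by $2X\cdot|A'|\cdots(|A'|-n+k+1)\cdot\max_m r_k(m)$, and controls $\max_m r_k(m)\ll X^{\epsilon}(1+X^k/p)$ by the same $\alpha$-normalization. As it stands, every one of the three bounds in your outline rests on an unproved step.
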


\section{Preliminary lemmas}

There are a number of interesting results in the literature concerning multilinear exponential sums; see \cite{jb}, \cite{sm}, \cite{gp:iep} and \cite{ids} for example and other references. We will need the following two.

\begin{lem}\label{lem:1}
Let $A_1,\dots,A_n \subseteq \mathbb{F}^*_p$ $(n \ge 2)$ be subsets. Then
\begin{equation}\label{lem:1_eq}
\left|\sum_{a_1 \in A_1,\dots, a_n \in A_n} e_p(a_1\cdots a_n)\right| \le p^{1/2}(|A_1|\cdots |A_n|)^{\frac{n-1}{n}}.
\end{equation}
\end{lem}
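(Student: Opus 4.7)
My plan is to prove the inequality by induction on $n \ge 2$, using Cauchy--Schwarz together with a symmetrization step to handle the inherent asymmetry it creates.

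For the base case $n=2$, I would apply Cauchy--Schwarz in the variable $a_1$ and then extend the outer sum from $A_1$ to all of $\mathbb{F}_p$:
$$
|S_2|^2 \;\le\; |A_1| \sum_{a_1 \in A_1} \Big|\sum_{a_2 \in A_2} e_p(a_1 a_2)\Big|^2 \;\le\; |A_1| \sum_{a_1 \in \mathbb{F}_p} \Big|\sum_{a_2 \in A_2} e_p(a_1 a_2)\Big|^2.
$$
Expanding the inner square and using orthogonality in $a_1$ collapses the diagonal to $|A_2|$, giving $|S_2|^2 \le p|A_1||A_2|$, which matches \eqref{lem:1_eq} for $n=2$.

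For the inductive step, assuming the result for $n-1$, I would single out the last variable and write $S_n = \sum_{a_n \in A_n} U(a_n)$ where
$$
U(a_n) \;=\; \sum_{a_1 \in A_1,\,\dots,\,a_{n-1} \in A_{n-1}} e_p(a_n a_1 a_2 \cdots a_{n-1}).
$$
Since $a_n \in \mathbb{F}^*_p$, the substitution $b_1 = a_n a_1$ is a bijection from $A_1$ onto $a_n A_1$, so $U(a_n)$ is an $(n-1)$-linear sum over the sets $a_n A_1, A_2, \dots, A_{n-1}$, whose cardinalities equal those of $A_1,\dots,A_{n-1}$. The induction hypothesis therefore yields $|U(a_n)| \le p^{1/2}(|A_1|\cdots|A_{n-1}|)^{(n-2)/(n-1)}$. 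Applying Cauchy--Schwarz one more time to $\sum_{a_n \in A_n} U(a_n)$ produces the \emph{asymmetric} estimate
$$
|S_n| \;\le\; |A_n|\, p^{1/2} (|A_1|\cdots|A_{n-1}|)^{(n-2)/(n-1)}.
$$

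The final step is the symmetrization that resolves this asymmetry. Because $S_n$ is invariant under any permutation of the variables (multiplication in $\mathbb{F}^*_p$ being commutative), the argument above applies with any index $i$ playing the role of $n$, so
$$
|S_n| \;\le\; |A_i|\, p^{1/2} \prod_{j \ne i} |A_j|^{(n-2)/(n-1)} \qquad (1 \le i \le n).
$$
Multiplying these $n$ inequalities and taking the $n$-th root, each $|A_j|$ accumulates total exponent $1 + (n-1)\cdot\frac{n-2}{n-1} = n-1$, so
$$
|S_n|^n \;\le\; p^{n/2} \prod_{j=1}^n |A_j|^{n-1},
$$
which is exactly \eqref{lem:1_eq}. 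The main obstacle is precisely this asymmetry: a single application of Cauchy--Schwarz forces a linear factor $|A_i|$ instead of the balanced exponent $(n-1)/n$, so without the averaging step one would be off. The geometric mean trick is what buys back the symmetric exponent and closes the induction cleanly.
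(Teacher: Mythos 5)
Your proof is correct, but it takes a genuinely different route from the paper's. The paper simply quotes the bilinear bound $\max_{m\in\mathbb{F}_p^*}\bigl|\sum_{a_1\in A_1,a_2\in A_2}e_p(ma_1a_2)\bigr|\le (p|A_1||A_2|)^{1/2}$ from Petridis--Shparlinski, applies it with $m=a_3\cdots a_n$ fixed to the two \emph{largest} sets, bounds the remaining $n-2$ variables trivially, and then checks that under the ordering $|A_1|\ge\cdots\ge|A_n|$ the resulting quantity $p^{1/2}|A_1|^{1/2}|A_2|^{1/2}|A_3|\cdots|A_n|$ is at most $p^{1/2}(|A_1|\cdots|A_n|)^{(n-1)/n}$. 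You instead run an induction on $n$: you reprove the bilinear base case from scratch (Cauchy--Schwarz, completion, orthogonality), peel off one variable by absorbing it into another via the dilation $a_1\mapsto a_na_1$ (legitimate since $a_n\ne 0$, so the new set sits in $\mathbb{F}_p^*$ and has the same cardinality), and then remove the resulting asymmetry by taking the geometric mean of the $n$ permuted estimates; your exponent count $1+(n-1)\cdot\frac{n-2}{n-1}=n-1$ is right. Your version is self-contained and avoids any ordering or case analysis; the paper's is a one-line reduction to a quoted result and, as a by-product, gives the slightly sharper asymmetric intermediate bound involving the two largest sets, just as your argument gives the sharper $\min_i |A_i|\,p^{1/2}\prod_{j\ne i}|A_j|^{(n-2)/(n-1)}$ before averaging. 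One cosmetic point: your final step in the inductive stage is just the triangle inequality, not Cauchy--Schwarz.
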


\begin{proof}
We assume that $|A_1| \ge |A_2| \ge \cdots \ge |A_n|$. The inequality follows from the well known result
$$
\max_{m \in \mathbb{F}^*_p}\left|\sum_{a_1 \in A_1, a_2 \in A_2} e_p(m a_1 a_2)\right| \le (p|A_1||A_2|)^{1/2},
$$
see \cite[(2)]{gp:iep}.
\end{proof}

\begin{lem}\label{lem:2}
Let $0 < \delta < 1/4$ and $n \in \mathbb{Z}_+$. There is an effectively computable $\delta' = \delta'(\delta) > 0$ such that if $p$ is a sufficiently large prime and $A_1, \dots, A_n \subset \mathbb{F}_p$ satisfy\\
$
\begin{array}{ll}
(i) & |A_i| > p^\delta\ \mbox{for}\ 1 \le i \le n;\\
(ii) & \prod_{i=1}^n |A_i| > p^{1+\delta};
\end{array}
$\\
then there is the exponential sum bound
$$
\biggl|\sum_{a_1 \in A_1, \dots, a_n \in A_n} e_p(a_1 \cdots a_n)\biggr| < p^{-\delta'}|A_1| \cdots |A_n|.
$$
\end{lem}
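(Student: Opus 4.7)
The natural route is via Bourgain's sum--product machinery in $\mathbb{F}_p$. I propose an induction on $n$, with the base case $n=2$ being Bourgain's bilinear exponential sum estimate: whenever $A,B\subset \mathbb{F}_p$ satisfy $|A|,|B|>p^{\delta}$ and $|A||B|>p^{1+\delta}$, one has
$$\Bigl|\sum_{a\in A,\,b\in B}e_p(ab)\Bigr|<p^{-\delta_{0}'}|A||B|$$
for some $\delta_0'=\delta_0'(\delta)>0$. This bilinear bound is itself a consequence of the sum--product theorem of Bourgain--Katz--Tao, and would be quoted as an external input.

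For the inductive step, re-order so that $|A_1|\ge\cdots\ge|A_n|$ and split the variables into two groups. I would choose $1\le k<n$ so that, writing $B=A_1\cdots A_k$ and $C=A_{k+1}\cdots A_n$ (product sets in $\mathbb{F}_p^*$), both $|B|,|C|>p^{\delta}$ and $|B||C|>p^{1+\delta}$; such a $k$ exists by hypotheses (i) and (ii), since $|B|\ge\max_{i\le k}|A_i|>p^{\delta}$, and symmetrically for $C$, while the overall product is controlled by (ii). Letting $r_B(x)$ and $r_C(y)$ denote the number of representations as products, I rewrite
$$S:=\sum_{a_1\in A_1,\dots,a_n\in A_n}e_p(a_1\cdots a_n)=\sum_{x,y}r_B(x)\,r_C(y)\,e_p(xy).$$

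Next, I would dyadically decompose according to the level sets of $r_B$ and $r_C$: set $B_U=\{x\in B:r_B(x)\asymp U\}$ and $C_V=\{y\in C:r_C(y)\asymp V\}$. Then
$$|S|\ll(\log p)^{2}\max_{U,V}\,UV\,\Bigl|\sum_{x\in B_U,\,y\in C_V}e_p(xy)\Bigr|.$$
For each dominant pair $(U,V)$, I apply the bilinear bound to gain a factor $p^{-\delta_0'}$, provided $|B_U|,|C_V|>p^{\delta}$ and $|B_U||C_V|>p^{1+\delta}$. Summing the trivial bound $UV|B_U||C_V|\le\|r_B\|_1\|r_C\|_1=\prod_{i=1}^{n}|A_i|$ on the remaining ranges then yields the desired estimate $p^{-\delta'}\prod_{i=1}^{n}|A_i|$.

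The main obstacle is verifying the size conditions on the dyadic pieces $B_U,C_V$ uniformly in $(U,V)$. The dangerous case is when $|B|$ is much smaller than $\prod_{i\le k}|A_i|$ (and analogously for $C$), that is, when the $A_i$'s have unusually strong multiplicative structure. Here one must fall back on the sum--product theorem directly: high multiplicative energy of the $A_i$'s forces strong additive expansion elsewhere, and this additive expansion is what provides cancellation in $e_p$ through a separate branch of the argument. Propagating the quantitative dependence of $\delta'$ on $\delta$ through the dyadic windows and through the induction on $n$ then gives the effectively computable $\delta'=\delta'(\delta)$ claimed; the effectivity is inherited from the effectivity of the Bourgain--Katz--Tao sum--product bound.
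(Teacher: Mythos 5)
The paper does not actually prove this lemma: it is a verbatim restatement of Theorem A of Bourgain's paper \emph{Multilinear exponential sums in prime fields under optimal entropy condition on the sources}, and the paper's ``proof'' is a one-line citation of that theorem. Your proposal instead tries to rederive Bourgain's theorem by induction from the bilinear case, and that derivation has a genuine gap exactly where the real difficulty of the theorem lies. (Minor point first: the base case $n=2$ needs no sum--product input at all; the classical completion/Cauchy--Schwarz bound $|\sum e_p(ab)|\le (p|A||B|)^{1/2}$, i.e.\ Lemma \ref{lem:1}, already gives the saving $p^{-\delta/2}$ under hypothesis (ii).)

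The gap is the case you flag as ``the main obstacle'' and then dispose of in one sentence. Take all the $A_i$ inside a multiplicative subgroup $H$ with $|H|\approx p^{\delta+\epsilon}$ and $n$ large enough that (ii) holds. Then the product sets $B=A_1\cdots A_k$ and $C=A_{k+1}\cdots A_n$ both sit inside $H$, so $|B_U||C_V|\le |B||C|\le p^{2\delta+2\epsilon}\ll p^{1+\delta}$ for \emph{every} dyadic pair $(U,V)$: the bilinear bound is never applicable, and the ``trivial bound on the remaining ranges'' is $\sum_{U,V}UV|B_U||C_V|=\prod_i|A_i|$ with no saving whatsoever, since all the mass can live in a single failing piece. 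Obtaining cancellation in this regime --- exponential sums over sets with very small product set, i.e.\ over approximate multiplicative subgroups --- is precisely the content of Bourgain's theorem; it requires multiplicative-energy and Balog--Szemer\'edi--Gowers arguments combined with the sum--product theorem in a quantitatively delicate way, not a routine ``separate branch.'' So your outline reduces the lemma to its hardest sub-case and leaves that sub-case unproved. Given that you are already willing to quote external inputs, the honest (and the paper's intended) proof is to quote Bourgain's Theorem A itself, of which the lemma is an exact transcription.
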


\begin{proof}
It follows from Theorem A of the paper \cite{jb}.
\end{proof}

\section{Proof of Theorem \ref{thm:1}}

We assume throughout the proof that $A \in \mathcal{S}(X)$ and satisfies $S(X)=|A|$. We begin with the first inequality. We choose $s_1 \in A$ that realizes \eqref{property-1} with every element of $A$ by being at least $\frac{|A|}{2}$ times at the denominator. We denote by $A_1$ the set of values that are thereby at the numerator. Restricting our attention to $A_1$, we choose $s_2 \in A_1$ that realizes \eqref{property-1} with every element of $A_1$ by being at least $\frac{|A_1|}{2}$ times at the numerator and we denote by $A_2$ the set of values that are thereby at the denominator.

Now, for each value $s \in A_2$ we have two representations. Indeed,
$$
\frac{s}{s_1} \equiv a \pmod{p} \quad \mbox{and} \quad \frac{s_2}{s} \equiv b \pmod{p} \quad \mbox{with} \quad 0 < |a|,|b| \le X.
$$
We deduce that
$$
s_1 a \equiv \frac{s_2}{b} \pmod{p} \ \Rightarrow \ ab \equiv \frac{s_2}{s_1} \equiv
: \alpha \pmod{p} \quad \mbox{with} \quad 0 < |a|,|b|,|\alpha| \le X.
$$
We thus have $ab=\alpha+Kp$ with $0 \le |K| \le \bigl\lfloor\frac{2X^2}{p}\bigr\rfloor$. For each fixed value of $K$, the number of solutions $(a,b)$ is at most $2\tau_2(\alpha+Kp) \ll X^\epsilon$ and we deduce that
$$
|A| \le 4 |A_2| \ll X^\epsilon\left(1+\frac{X^2}{p}\right).
$$

We now turn to the second inequality. From Lemma \ref{lem:1} with $n = 2$, we know that
$$
|T_1|+|T_2|:=\left|\sum_{s_1,s_2 \in A} e_p\left(\frac{s_1}{s_2}\right)\right|+\left|\sum_{s_1,s_2 \in A} e_p\left(\frac{2s_1}{s_2}\right)\right| \le 2 p^{1/2}|A|.
$$
The result will follow if we can show that $|T_1|+|T_2| \ge \frac{|A|^2}{400}$. We will assume that $|T_1| \le \frac{|A|^2}{400}$. We denote by $W$ the set of pairs $(s_1,s_2) \in A^2$ for which we know from the hypothesis $A \in \mathcal{S}(X)$ that $0 < \bigl|\frac{s_1}{s_2}\bigr|_p \le X$. In particular, $|W|=\frac{|A|^2+|A|}{2}$. We divide $T_1$ into
$$
T_{1,1}+T_{1,2}+T_{1,3} := \sum_{\substack{s_1,s_2 \in A \\ (s_1,s_2) \in W}} e_p\left(\frac{s_1}{s_2}\right)+\sum_{\substack{s_1,s_2 \in A \\ |\frac{s_1}{s_2}|_p > \frac{3p}{8} \\ (s_1,s_2) \notin W}} e_p\left(\frac{s_1}{s_2}\right)+\sum_{\substack{s_1,s_2 \in A \\ |\frac{s_1}{s_2}|_p \le \frac{3p}{8} \\ (s_1,s_2) \notin W}} e_p\left(\frac{s_1}{s_2}\right).
$$
From now, we denote by $U_i$ the number of pairs $(s_1,s_2)$  in the sum $T_{1,i}$. We have $U_1=|W|=\frac{|A|^2+|A|}{2}$ and $U_2+U_3=\frac{|A|^2-|A|}{2}$ and we write $U_i:=u_i|A|^2$. From
$$
\Re (T_{1,1}+T_{1,2}+T_{1,3}) \le \frac{|A|^2}{400}
$$
and the hypothesis $1 \le X \le \frac{p}{12}$, we get to the inequality
\begin{eqnarray}\label{eq:1}
u_2 \ge u_1 \cos\left(\frac{\pi}{6}\right)-u_3 \cos\left(\frac{\pi}{4}\right)-\frac{1}{400}.
\end{eqnarray}
But we will have $|T_2| \ge \frac{|A|^2}{400}$ if
\begin{eqnarray}\label{eq:2}
u_1 \cos\left(\frac{\pi}{3}\right)+u_2 \cos\left(\frac{\pi}{2}\right) \ge u_3 + \frac{1}{400}.
\end{eqnarray}
We deduce from \eqref{eq:1}, $u_1=\frac{1}{2}+\frac{1}{2|A|}$ and $u_2+u_3=\frac{1}{2}-\frac{1}{2|A|}$ that 
\begin{equation}\label{eqthm1}
u_2 \ge \frac{\frac{1}{2}\left(\cos\left(\frac{\pi}{6}\right)-\cos\left(\frac{\pi}{4}\right)\right)+\frac{1}{2|A|}\left(\cos\left(\frac{\pi}{6}\right)+\cos\left(\frac{\pi}{4}\right)\right)-\frac{1}{400}}{1-\cos\left(\frac{\pi}{4}\right)}.
\end{equation}
Using inequality \eqref{eqthm1} and the fact that $\cos\left(\frac{\pi}{2}\right)=0$, we see that \eqref{eq:2} is satisfied if
$$
\left(1+\frac{1}{|A|}\right)\cos\left(\frac{\pi}{3}\right)+\frac{\cos\left(\frac{\pi}{6}\right)-\cos\left(\frac{\pi}{4}\right)+\frac{1}{|A|}\left(\cos\left(\frac{\pi}{6}\right)+\cos\left(\frac{\pi}{4}\right)\right)-\frac{1}{200}}{1-\cos\left(\frac{\pi}{4}\right)} \ge 1-\frac{1}{|A|}+\frac{1}{200}
$$
which holds since
$$
\cos\left(\frac{\pi}{3}\right)+\frac{\cos\left(\frac{\pi}{6}\right)-\cos\left(\frac{\pi}{4}\right)-\frac{1}{200}}{1-\cos\left(\frac{\pi}{4}\right)} \ge 1+\frac{1}{200}.
$$
The proof is complete.

\section{Proof of Theorem \ref{thm:2}}

We assume throughout the proof that $A \in \mathcal{R}_n(X)$ and satisfies $R_n(X)=|A|$. Also, for any $k \ge 1$, we say that $(s_1,\dots,s_k)$ is an {\it admissible} $k$-tuple if the $s_j$ are pairwise distinct $(j=1,\dots,k)$. There are exactly $|A|\cdot(|A|-1)\cdots(|A|-k+1)$ admissible $k$-tuples in $A^k$. We can assume that $|A|$ is large enough since otherwise there is nothing to prove.

We begin with the third inequality. Assuming that $X \le 0.24 p$ and that $|A| > p^{1/n+\delta}$ for some fixed $0 < \delta < 1/2n$, we get
{\small$$
|A|^n \ll \Biggl|\sum_{\substack{s_1,\dots,s_n \in A \\ (s_1,\dots,s_n)\ admissible}} e_p(s_1\cdots s_n)\Biggr|-O(|A|^{n-1}) \ll \Biggl|\sum_{s_1,\dots,s_n \in A} e_p(s_1\cdots s_n)\Biggr| \le p^{-\delta'}|A|^{n}
$$}\par
\noindent for some $\delta' > 0$, from Lemma \ref{lem:2}. This is a contradiction for $p$ large enough and we deduce that $|A| \ll p^{1/n+\epsilon}$ for each $\epsilon > 0$.

For the first inequality, we define $\alpha$ by
$$
\pm \alpha := \max_{\substack{r_1,\dots,r_n \in A \\ (r_1,\dots,r_n)\ admissible}} |r_1 \cdots r_n|_p,
$$
and we assume that $\alpha \equiv s_1 \cdots s_n \pmod{p}$ (with $(s_1,\dots,s_n)$ admissible). We now define a change of variable according to this choice. In the set $A':=A \setminus \{s_1,\dots,s_n\}$, we can write an element $r$ as $r \equiv a_j\frac{s_j}{\alpha} \pmod{p}$ for some $0 < |a_j| \le X$ $(j=1,\dots,n)$.

Any of the $|A'|\cdot(|A'|-1)\cdots(|A'|-n+1)$ admissible $n$-tuples $(r_1,\dots,r_n)$ gives rise to
\begin{eqnarray}\label{s1}
r_1\cdots r_n \equiv c \pmod{p} & \Rightarrow & a_1\frac{s_1}{\alpha}\cdots a_n\frac{s_n}{\alpha} \equiv c \pmod{p}\\ \nonumber
& \Rightarrow & a_1\cdots a_n \equiv c\alpha^{n-1} \pmod{p}\\ \label{s2}
& \Rightarrow & a_1\cdots a_n = c\alpha^{n-1} + Kp\\ \nonumber
\end{eqnarray}
where $0 < |c|,|a_1|,\dots,|a_n| \le X$ and $0 \le |K| \le \lfloor\frac{2X^n}{p}\rfloor$. From there, we distinguish two cases.

Case 1: $K=0$ for more than half of the admissible $n$-tuples. In this case, we have
\begin{eqnarray*}
|A'|^n & \ll & |\{(a_1,\dots,a_n,c) \in \mathbb{Z}^{n+1}:\ a_1\cdots a_n=c\alpha^{n-1},\ 0< |\alpha|,|c| \le X\}|\\
& = & 2^{n-1}\sum_{0 < |c| \le X}\tau_n(c\alpha^{n-1}) \ll X^{1+\epsilon}
\end{eqnarray*}
for each fixed $\epsilon \ge 0$.

Case 2: $K \neq 0$ for at least half of the admissible $n$-tuples. In this case, we fix a value of $r=r_1 \equiv a\frac{s_1}{\alpha}(\not\equiv 0) \pmod{p}$ that is in $\gg |A'|^{n-1}$ admissible $n$-tuples $(r,r_2,\dots,r_n)$ in \eqref{s1} that lead to \eqref{s2} with $K \neq 0$. Then, we consider the equation
\begin{eqnarray*}
rr_2\cdots r_n \equiv c \pmod{p} & \Rightarrow & aa_2\cdots a_n \equiv c\alpha^{n-1} \pmod{p}\\
& \Rightarrow & aa_2\cdots a_n = c\alpha^{n-1} + Kp\\
\end{eqnarray*}
with $0 < |c|,|a_2|,\dots,|a_n| \le X$ and $0 < |K| \le \lfloor\frac{2X^n}{p}\rfloor$. Now, we write $d:=gcd(a,\alpha^{n-1})$ and $a':=\frac{a}{d}$, $\beta:=\frac{\alpha^{n-1}}{d}$ and $K':=\frac{K}{d}$. We find that
$$
aa_2\cdots a_n = c\alpha^{n-1} + Kp\ \Rightarrow\ a'a_2\cdots a_n \equiv K'p \pmod{\beta}
$$
so that a fixed value of $K'$ gives at most $d$ values of $a_2\cdots a_n \pmod{\alpha^{n-1}}$. There are $\ll\frac{X^n}{dp}$ possible values for $K'$ and since $0 < |a_2\cdots a_n| \le \alpha^{n-1}$ we get that we have in fact at most $2d$ values of $a_2 \cdots a_n$ for each. That is, we have at most $\ll \frac{X^n}{p}$ possible values of $(c,K)$. We get
$$
|A'|^{n-1} \ll \sum_{\substack{(c,K)\\ c\alpha^{n-1}+Kp \neq 0}} \tau_{n-1}\left(\frac{c\alpha^{n-1}+Kp}{a}\right) \ll \frac{X^{n+\epsilon}}{p}
$$
for each fixed $\epsilon > 0$. For $n=2$ we have in fact $\epsilon=0$ in this last inequality. The result follows.

For the second inequality, let's write
{\small$$
r_k(a):=|\{(s_1,\dots,s_k) \in A'^k\ \mbox{admissible}:\ s_1 \cdots s_k \equiv a \pmod{p}\}|
$$}\par
\noindent for each $k=1,\dots,n-1$. For a fixed value of $k$ we can split each admissible $n$-tuple $(s_1,\dots,s_n) \in A'^n$ into $s_1 \cdots s_n \equiv bc \equiv a \pmod{p}$, i.e. respectively $s_1 \cdots s_{n-k} \equiv b \pmod{p}$ and $s_{n-k+1} \cdots s_n \equiv c \pmod{p}$. This leads to
\begin{eqnarray*}
|A'| \cdots (|A'|-n+1) & \le & \sum_{0 < |a| \le X}\sum_{b=1}^{p-1}r_{n-k}(b)r_{k}(|ab^{-1}|_p)\\
& \le & \max_{m \in \mathbb{F}^*_p} r_{k}(m)\sum_{0 < |a| \le X}\sum_{b=1}^{p-1}r_{n-k}(b)\\
& = & 2X|A'| \cdots (|A'|-n+k+1)\max_{m \in \mathbb{F}^*_p} r_{k}(m).
\end{eqnarray*}
Now, for any fixed $m \in \mathbb{F}^*_p$ we use the change of variable stated above to write
\begin{eqnarray*}
s_1 \cdots s_k \equiv m \pmod{p} & \Rightarrow & a_1 \cdots a_k \equiv \ell \pmod{p} \quad (\mbox{for some}\ \ell = \pm |\ell|_p )\\
& \Rightarrow & a_1 \cdots a_k = \ell + Kp \quad (\mbox{with}\ 0 \le |K| \le \biggl\lfloor\frac{2X^k}{p}\biggr\rfloor).
\end{eqnarray*}
As previously, we deduce that
$$
r_k(m) \le 2^{k-1}\sum_{K}\tau_k(\ell +  Kp) \ll X^{\epsilon}\biggl(1+\frac{X^k}{p}\biggr)
$$
Overall, we get to
$$
|A| \ll |A'| \ll \biggl(X^{1/k} + \frac{X^{1+1/k}}{p^{1/k}}\biggr)X^{\epsilon}
$$
for any $k=1,\dots,n-1$.

\section{Concluding remarks}

The set 
$$
A:=\{\pm 2^k: k = 0,\dots,\lfloor\log(X)/\log(2)\rfloor\}
$$
shows that $S(X) \gg \log(2X)$. Also, the set
$$
A:=\{\pm 1,\dots,\pm \lfloor X^{1/n} \rfloor\}
$$
shows that $R_n(X) \gg X^{1/n}$. We conjecture that both $S(X) \ll_\epsilon X^\epsilon$ and $R_n(X) \ll_{\epsilon,n} X^{1/n+\epsilon}$ hold for each $\epsilon > 0$ when $X \le \left(\frac{1}{2}-t\right)p$ for a fixed $t > 0$ as $p \rightarrow \infty$.

\vskip 50pt

{\sc D\'epartement de math\'ematiques et de statistique, Universit\'e Laval, Pavillon Alexandre-Vachon, 1045 Avenue de la M\'edecine, Qu\'ebec, QC G1V 0A6} \\
{\it E-mail address:} {\tt Patrick.Letendre.1@ulaval.ca}

\end{document}